\documentclass[11pt]{amsart}
\usepackage[T1]{fontenc}
\usepackage{lmodern}
\usepackage{amsmath,amssymb,amsthm,mathtools}
\usepackage{microtype}
\usepackage[a4paper,margin=30mm]{geometry}
\usepackage{enumitem}
\usepackage{xcolor}
\usepackage[colorlinks=true,linkcolor=blue!50!black,citecolor=blue!50!black,urlcolor=blue!50!black]{hyperref}
\hypersetup{pdftitle={Quasidiagonal traces need not form a face},pdfauthor={Mehdi Moradi}}
\newtheorem{theorem}{Theorem}[section]
\newtheorem{lemma}[theorem]{Lemma}

\theoremstyle{definition}

\theoremstyle{remark}

\DeclareMathOperator{\Tr}{Tr}
\DeclareMathOperator{\tr}{tr}

\numberwithin{equation}{section}
\title[Quasidiagonal traces]{Quasidiagonal traces need not form a face}
\author{Mehdi Moradi}
\address{Department of Mathematics, University of Toronto, Toronto, Ontario, Canada M5S 2E4}
\email{mehdi.moradi@utoronto.ca}
\date{September 15, 2026}
\subjclass[2020]{Primary 46L05, 46L35; Secondary 22D25}
\keywords{Quasidiagonal trace, amenable trace, residually finite-dimensional
\(C^*\)-algebra, Kazhdan's property \((T)\), tracial state space}
\begin{document}
\begin{abstract}
Let \(G\) be an infinite residually finite countable discrete group
with Kazhdan's property~\((T)\).
Assume that its finite-dimensional irreducible unitary representations
admit an exhaustive ordering with nondecreasing dimensions tending to
infinity and bounded consecutive dimension ratios.
From this data we construct a separable unital residually
finite-dimensional \(C^*\)-algebra with a faithful quasidiagonal
tracial state \(\tau=\frac14\mu_1+\frac34\mu_2\), where \(\mu_1\)
is not quasidiagonal. For the resulting algebra, quasidiagonal traces
do not form a face of the tracial state space.
The construction has two copies of a weighted representation ladder
joined at their top levels. We prove that every quasidiagonal trace
gives equal weight to the two copies. The proof combines uniform
Kazhdan spectral projections, exact ranks of rounded finite-rank
compressions, and a weighted Hilbert--Schmidt comparison that controls
all representation blocks.
\end{abstract}
\maketitle
\section{Introduction}
Finite-dimensional approximation distinguishes several natural subsets
of the tracial state space of a \(C^*\)-algebra. Amenable traces admit
matrix models whose multiplicativity defects vanish in normalized
Hilbert--Schmidt norm. Quasidiagonal traces require those defects to
vanish in operator norm. This distinction concerns the models
themselves, even when both notions use the same pointwise convergence
of normalized matrix traces.

Brown's systematic treatment~\cite{Brown2006} identifies these
approximation properties and their permanence rules. Both sets of
traces are convex and weak-star closed, and the amenable traces form
a face. The latter property says that a convex decomposition of an
amenable trace has amenable components. Under the representation-growth
hypotheses below, we construct a quasidiagonal trace for which the
corresponding conclusion fails.

\begin{theorem}\label{thm:main}
Let \(G\) be an infinite residually finite countable discrete group
with property~\((T)\).
Suppose that \((\pi_k,H_k)_{k\geq0}\) lists every equivalence class
of finite-dimensional irreducible unitary representations exactly once.
Write \(d_k=\dim H_k\), and assume
\[
\begin{gathered}
 \pi_0=1_G,\qquad 1=d_0\leq d_1\leq\cdots,\qquad
 d_k\longrightarrow\infty,\\
 R_0:=\sup_{k\geq1}\frac{d_k}{d_{k-1}}<\infty.
\end{gathered}
\]
For every integer \(b>R_0\) and every choice of Hilbert-space
isometries \(J_k:H_{k-1}\to H_k\), the algebra \(A\) constructed
in Section~\ref{sec:construction} is separable, unital and residually
finite-dimensional. It admits a faithful quasidiagonal tracial state
\(\tau\) and tracial states \(\mu_1,\mu_2\) such that
\[
                 \tau=\frac14\mu_1+\frac34\mu_2,
                 \qquad \mu_1\notin T_{\mathrm{qd}}(A).
\]
Consequently \(T_{\mathrm{qd}}(A)\) is not a face of \(T(A)\).
\end{theorem}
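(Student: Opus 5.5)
We have to guess the construction of Section~\ref{sec:construction} from the abstract: two copies of a weighted representation ladder joined at their top levels. Concretely, I expect \(A\) to be generated inside \(\prod_{n}M_{N_n}\) (or a subalgebra of \(\prod_k B(H_k)\otimes M_{b^{\,\cdot}}\oplus\prod_k B(H_k)\otimes M_{b^{\,\cdot}}\)) by the images of \(C^*(G)\) acting on the blocks \(H_k\) with multiplicities given by powers of \(b\), together with partial isometries built from \(J_k\) that link level \(k-1\) to level \(k\) in each copy. The two copies share their top levels, which forces the approximating traces to spread equally across the two ladders. The plan is as follows. First check the routine claims: separability and unitality are immediate, and residual finite-dimensionality follows because \(A\) embeds into a product of matrix algebras, each coordinate projection being a finite-dimensional representation. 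Next define \(\tau\) as a weak-star limit of normalized traces on these finite blocks, weighted by the multiplicities, so that \(\tau\) is quasidiagonal, since it is a limit of traces along genuine \(*\)-homomorphisms into matrices. Choosing the weights so that every block gets positive mass in the limit makes \(\tau\) faithful; the ratio bound \(b>R_0\) is what makes the geometric weights \(b^{-k}\)-type series dominate \(d_k\) growth and converge.

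For the decomposition I would take \(\mu_1,\mu_2\) to be the traces obtained by restricting \(\tau\)'s defining blocks to the two copies, rebalanced so that \(\tau=\frac14\mu_1+\frac34\mu_2\). The intent is that \(\mu_1\) gives unequal weight to copy one versus copy two (for instance mass concentrated on copy one), whereas \(\tau\) itself is balanced. The asymmetric coefficients \(\frac14,\frac34\) suggest \(\mu_2\) is a mixture that compensates. The tracial property of \(\mu_i\) must be checked on the linking partial isometries; this uses that the ladder weights satisfy the balancing relation forced by \(J_k\), with multiplicity \(b\) per level.

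The core step is the structural claim that \emph{every} quasidiagonal trace gives equal weight to the two copies. Given \(\mu\in T_{\mathrm{qd}}(A)\), take unital c.p. maps \(\varphi_n:A\to M_{m_n}\) that are asymptotically multiplicative in operator norm, with \(\tr\circ\varphi_n\to\mu\). Property \((T)\) then applies to the restriction to \(G\). A Kazhdan projection \(p\in C^*(G)\) exists, and by residual finiteness, together with the uniform spectral gap, the spectral projections onto each isotypic component \(\pi_k\) for \(k\le K\) are norm limits of polynomials in a Kazhdan element. Because \(\varphi_n\) is almost multiplicative in norm, \(\varphi_n\) applied to these elements is norm close to genuine projections. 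Rounding gives projections \(P_{n,k}\) whose ranks are exact integers, and these integers compute \(\mu\) of the corresponding central projections. The linking partial isometries are almost multiplicative too, so they give near-unitary equivalences between rounded projections at consecutive levels in each copy. Hence the ranks match exactly, and the top level, being shared, forces equal ranks in copy one and copy two. To pass from finitely many levels to all blocks, I use the weighted Hilbert--Schmidt comparison. The tail of high levels contributes mass bounded by \(\sum_{k>K}d_k b^{-k}\), which is small by \(b>R_0\), uniformly in \(n\).

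Finally, \(\mu_1\) assigns different weights to the two copies, so it is not quasidiagonal, while \(\tau\in T_{\mathrm{qd}}(A)\). Hence \(T_{\mathrm{qd}}(A)\) is not a face. The main obstacle is the core step. Operator-norm almost-multiplicativity must be converted into \emph{exact} rank equalities across infinitely many levels, with error control uniform in \(k\). The uniform Kazhdan constant, which is independent of \(k\), is what I would try to exploit first.
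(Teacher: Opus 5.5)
Your core step has a genuine gap, in two places.

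First, the consecutive-level comparison is incorrect as stated. At coordinate \(m\), the \(\overline\pi_{k-1}\)- and \(\overline\pi_k\)-isotypic blocks of one copy have dimensions \(d_{k-1}b^{m-k+1}\) and \(d_kb^{m-k}\). Because \(J_k\) is only an isometry, \(T_i^\varepsilon T_i^{\varepsilon*}\) restricts to \(\overline J_k\overline J_k^{\,*}\otimes1\) on the level-\(k\) block. That is a proper subprojection when \(d_k>d_{k-1}\). So the shifts give no near-equivalence between rounded isotypic projections, and ranks at consecutive levels do not ``match''.

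The paper passes to multiplicity spaces. The projection \(q_k\in M_{d_k}(A)\) onto the invariant vectors of \(\pi_k\otimes u\) has range \(\mathbb C\Omega_k\otimes(\text{multiplicity space})\). The rescaled shifts \(V_{k,i}^\varepsilon=\sqrt{d_k/d_{k-1}}\,q_k^\varepsilon(J_k\otimes T_i^\varepsilon)q_{k-1}^\varepsilon\) satisfy exact identities, because \(\langle\Omega_k,(J_k\otimes\overline J_k)\Omega_{k-1}\rangle=\sqrt{d_{k-1}/d_k}\). Together with the bridge pieces \(w_k\), they give \(r_{k-1}^\varepsilon-t_{k-1}^\varepsilon=b\,r_k^\varepsilon\) and \(t_k^0=t_k^1\) for every \(k\). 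Here \(t_k\) is the top block at coordinate \(m=k\). The join therefore acts at every level, not at one shared top level. The factor \(\sqrt{d_k/d_{k-1}}\le\sqrt{R_0}\) is where bounded ratios enter the tolerance.

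Second, and decisively, finitely many levels plus a tail estimate cannot work, even with the correct recurrences. Fix \(K\), let \(m>K\), and compress \(\rho_m\) to \(\Pi_m=e_m^1+F_m^0\). This projection has the following properties:
\begin{enumerate}[label=(\roman*)]
\item It commutes exactly with \(\sigma(g)\), \(e^\varepsilon\), \(F^\varepsilon\), \(T_i^1\) and \(s\), and with every \(z_k,q_k,V_{k,i}^\varepsilon,w_k\) for \(k\le K\).
\item It has equal top ranks \(d_m\) in both copies.
\item At most \(2\sum_{k>K}d_kb^{-k}\) of its rank lies above level \(K\).
\item Its normalized traces converge to \(\mu_1\) along \(\nu\).
\end{enumerate}
Its only failure, apart from \(\mathcal K\), is the shift \(T_i^0\) into the top of copy~0, at level \(m>K\). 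Algebraically, the recurrences up to level \(K\) only give \(r_k^0-r_k^1=b^{K-k}(r_K^0-r_K^1)\), so an uncontrolled discrepancy at level \(K\) propagates downward. Your tail bound is, moreover, only known for coordinate traces.

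The missing idea is an exact zero boundary condition. For one compression \(P\) of rank \(N\), the rank identities must hold at \emph{all} levels at once, with a tolerance independent of \(k\). This is supplied by the uniform Kazhdan gap for every \(\pi_k\otimes u\) and the resolvent bound \(\|[1\otimes P,q_k]\|\le C_q\delta\). The partial trace formula then forces \(r_k^\varepsilon=0\) once \(d_k>2N\). Descending induction from this zero tail gives \(r_k^0=r_k^1\) for all \(k\). Recovering the sector ranks still needs the Kazhdan Hilbert--Schmidt averaging and the weighted comparison, which give \(|\mathrm{rank}(P\rho(e^\varepsilon))-\sum_kd_kr_k^\varepsilon|\le40\kappa^{-2}\delta^2N\) over all blocks.

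Finally, \(\mu_\varepsilon\) is tracial for a different reason than any balancing of ladder weights. The \(T_i^\varepsilon\) commute with \(e^\varepsilon\), and the only mixing generator is \(s_m\), whose normalized rank \(d_m/c_m\) tends to \(0\). So the cut-downs become tracial only in the ultralimit. This is why \(\gamma\) must stay whole inside \(\mu_2=\frac13\mu_0+\frac23\gamma\).
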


The bounded-ratio hypothesis is the additional slow dimension growth
assumption. The group hypotheses already ensure that the complete
irreducible dual can be ordered as above with \(d_k\to\infty\).
We justify this point to distinguish an exhaustive sequence from a
selected family of representations.

For each fixed dimension \(d\), choose a finite Kazhdan set \(F\)
and a Kazhdan constant \(\kappa>0\). If two irreducible
representations \(\pi,\rho\) on \(\mathbb C^d\) satisfy
\(\max_{g\in F}\|\pi(g)-\rho(g)\|<\kappa\), then
\(d^{-1/2}I\) is a Kazhdan almost-invariant unit vector for the
Hilbert--Schmidt representation \(X\mapsto\pi(g)X\rho(g)^*\).
A nonzero invariant vector is an intertwiner, and irreducibility
and polar decomposition imply \(\pi\simeq\rho\).
One matrix realization per inequivalent class therefore gives a
\(\kappa\)-separated subset of the compact space \(U(d)^F\).
There are only finitely many such classes.

There are infinitely many irreducible classes with finite image.
Otherwise their kernels would have a finite-index intersection.
Residual finiteness, together with decomposition of the regular
representations of finite quotients, makes that intersection trivial,
forcing \(G\) to be finite. Thus the complete finite-dimensional
irreducible dual is infinite, and finiteness in each dimension permits
an exhaustive nondecreasing ordering with \(d_k\to\infty\).
Property~\((T)\) will also supply the uniform spectral gap in the
trace-balance argument. The RFD property of the constructed algebra
itself follows directly from its coordinate representations.

The construction has two copies of a finite representation ladder.
A partial isometry joins their top levels. The top levels have
vanishing normalized trace, so the limiting trace splits into
components supported on the separate copies. Nevertheless, the top
join imposes an exact rank constraint on sufficiently accurate
operator-norm matrix models. Kazhdan's property~\((T)\) makes the
required estimates uniform over every finite-dimensional irreducible
representation. A weighted Hilbert--Schmidt comparison then turns
the rank constraint into equality of the two copy weights for every
quasidiagonal trace.

Two features require explicit treatment. The representation sequence
must exhaust the finite-dimensional irreducible dual; a selected
subsequence with well-controlled dimensions is insufficient for the
rank comparison. In addition, equality on any fixed collection of
representation blocks does not imply equality of the whole trace.
The proof below accounts for every block with a uniform estimate and
an exact finite-rank cutoff.

\subsection{Relation to earlier work}
Voiculescu's abstract characterization of quasidiagonality
\cite{Voiculescu1991} uses asymptotically multiplicative and
asymptotically isometric completely positive matrix approximations.
For a quasidiagonal trace only the prescribed trace convergence is
required; no asymptotic isometry condition is imposed on its witnesses.
Our faithful trace is obtained using genuine finite-dimensional
representations, while its non-quasidiagonal component is detected
by testing all completely positive witnesses.

Among the early antecedents of this construction is Wassermann's
example of a separable quasidiagonal \(C^*\)-algebra whose image in
the Calkin algebra is not quasidiagonal~\cite{WassermannCalkin1991}.
Brown's memoir~\cite[Section~6.5, especially Corollaries~6.5.8
and~6.5.11]{Brown2006} explains this circle of examples through
products of matrix algebras, their \(c_0\)-ideals, and obstructions
coming from traces and finite-dimensional representations.
This provides an early model for the passage from finite-dimensional
data to a quotient in which quasidiagonality fails.
Wassermann's property-\((T)\) construction~\cite{Wassermann1991}
also uses the isolation of finite-dimensional representations.
We use invariant-vector projections in their tensor products,
the associated uniform spectral gap, and rank stability.
Background on property~\((T)\) and
finite-dimensional approximation is developed in
\cite{BekkaDeLaHarpeValette2008,BrownOzawa2008}.

The representation ladders and weighted identity-vector shifts are
closely related to Ozawa's construction~\cite{Ozawa2026}.
Here the geometry has two copies joined at the top, and the
conclusion concerns domination between traces on an RFD algebra.
The proof is given directly in terms of finite-rank compressions;
it does not use non-quasidiagonality of the hyperfinite factor as
an input.

The positive results of Tikuisis, White and Winter
\cite[Theorem~A]{TikuisisWhiteWinter2017} show that faithful traces on
separable nuclear \(C^*\)-algebras satisfying the UCT are quasidiagonal.
Gabe~\cite{Gabe2017} extends this conclusion to faithful amenable
traces on separable exact UCT algebras; for separable exact
quasidiagonal UCT algebras, all amenable traces are quasidiagonal.
These hypotheses describe important settings in which amenability
and norm approximation are closely linked. The present construction
imposes neither nuclearity nor the UCT.

For comparison with the face property, every component of the trace
\(\tau\) in Theorem~\ref{thm:main} is amenable, since quasidiagonal
traces are amenable and amenable traces form a face
\cite[Proposition~3.5.3]{Brown2006}. The obstruction proved here is
therefore specifically an obstruction to operator-norm
multiplicativity.

\subsection{Notation}
All algebras in the main construction are complex, unital and
separable. Write \(M_n=B(\mathbb C^n)\),
\(\Tr_n\) for the unnormalized matrix trace, and
\(\tr_n=n^{-1}\Tr_n\). The Hilbert--Schmidt norm on operators on an
arbitrary Hilbert space is always unnormalized and denoted by
\(\|\cdot\|_{\mathrm{HS}}\); a normalized matrix \(2\)-norm is
written \(\|\cdot\|_{2,\tr_n}\).
An overline denotes the conjugate Hilbert space or conjugate
representation. Hilbert-space tensor products use their natural
Hilbert-space norm, and matrix amplifications carry their unique
\(C^*\)-norm.

For a unital \(C^*\)-algebra \(B\), let \(T(B)\) be its tracial state
space. A trace \(\lambda\) lies in \(T_{\mathrm{qd}}(B)\) when there
are unital completely positive maps \(\psi_j:B\to M_{N_j}\) with
\[
 \|\psi_j(ab)-\psi_j(a)\psi_j(b)\|\to0,\qquad
 \tr_{N_j}(\psi_j(a))\to\lambda(a)\quad(a,b\in B).
\]
Replacing the first norm by the normalized \(2\)-norm defines
\(T_{\mathrm{am}}(B)\). The algebra is RFD if its
finite-dimensional representations separate points.
A convex subset \(F\) of \(T(B)\) is a face if
\(t\lambda_1+(1-t)\lambda_2\in F\), with \(0<t<1\), implies
\(\lambda_1,\lambda_2\in F\).

\section{The matrix construction}\label{sec:construction}
Let \(G\) have the hypotheses of Theorem~\ref{thm:main}. Assume that
\((\pi_k,H_k)_{k\geq0}\) lists all equivalence classes of finite-dimensional
irreducible unitary representations of \(G\), once each, with
\[
\begin{gathered}
 \pi_0=1_G,\qquad d_k=\dim H_k,\qquad 1=d_0\leq d_1\leq\cdots,\\
 d_k\longrightarrow\infty,\qquad
 R_0:=\sup_{k\geq1}\frac{d_k}{d_{k-1}}<\infty.
\end{gathered}
\]
The construction below uses ordinary Hilbert-space isometries
\(J_k:H_{k-1}\to H_k\), not intertwining maps.
Fix an integer \(b>R_0\), and put \(V_\ell=(\mathbb C^b)^{\otimes\ell}\),
where \(V_0=\mathbb C\). For \(1\leq i\leq b\), let
\[
 \Delta_{\ell,i}:V_{\ell+1}\longrightarrow V_\ell,\qquad
 \Delta_{\ell,i}(\delta_j\otimes v)=\delta_{ij}v.
\]
Thus \(\Delta_{\ell,i}\Delta_{\ell,j}^*=\delta_{ij}1\) and
\(\sum_i\Delta_{\ell,i}^*\Delta_{\ell,i}=1\).
Write
\[
 L_m=\bigoplus_{k=0}^m\overline H_k\otimes V_{m-k},
 \qquad E_m=L_m^{(0)}\oplus L_m^{(1)},\qquad
 S_m=\sum_{k=0}^m d_kb^{-k}.
\]
Then
\[
 S_m\uparrow S:=\sum_{k=0}^{\infty}d_kb^{-k}<\infty,\qquad
 c_m:=\dim E_m=2b^mS_m,\qquad d_mb^{-m}\longrightarrow0.
\]
Indeed \(d_k\leq R_0^k\), including the first ratio.

Let \(e_m^\varepsilon\) project onto \(L_m^{(\varepsilon)}\);
let \(F_m^\varepsilon\) project onto the \(k=m\) summand of that copy.
The representation
\[
 \sigma_m(g)=
 \bigoplus_{\varepsilon=0}^1\bigoplus_{k=0}^m
       \overline\pi_k(g)\otimes1_{V_{m-k}}
\]
acts on \(E_m\). Define \(T_{m,i}^\varepsilon\) to be zero on the other
copy and on its own top summand, and, for \(1\leq k\leq m\), set
\[
 T_{m,i}^\varepsilon\big|_{\overline H_{k-1}\otimes V_{m-k+1}}
       =\overline J_k\otimes\Delta_{m-k,i}.
\]
These are contractions. Finally, \(s_m\) is the canonical identity
from the top summand of copy~1 onto the top summand of copy~0,
extended by zero. In particular
\[
 s_m^*s_m=F_m^1,\qquad s_ms_m^*=F_m^0.
\]
Sequences of these operators are denoted without the subscript \(m\).
Put
\[
 \mathcal K=\bigoplus_{m=0}^{\infty}B(E_m),\qquad
 A=C^*\bigl(1,\mathcal K,e^\varepsilon,F^\varepsilon,
               T_i^\varepsilon,\sigma(g),s\bigr)
       \subseteq\prod_{m=0}^{\infty}B(E_m),
\]
where both values \(\varepsilon=0,1\), all \(1\leq i\leq b\), and
all \(g\in G\) are included among the generators.
Here \(\mathcal K\) is the norm-\(c_0\) direct sum. The algebra \(A\)
is unital and separable; its coordinate representations
\(\rho_m:A\to B(E_m)\) separate points, so \(A\) is residually
finite-dimensional.

\section{A faithful quasidiagonal trace and its components}
\label{sec:traces}
Set \(\tau_m=\tr_{c_m}\circ\rho_m\). Fix a free ultrafilter \(\nu\)
on \(\mathbb N_0\), and define
\[
 \tau_\infty=\lim_{m\to\nu}\tau_m,\qquad
 \gamma=\sum_{m=0}^{\infty}2^{-m-1}\tau_m,\qquad
 \tau=\frac12(\tau_\infty+\gamma).
\]
The convergence defining \(\gamma\) is in the norm of \(A^*\).
Every coefficient is positive, and the coordinate representations
separate points, so \(\gamma\), and therefore \(\tau\), is faithful.

\begin{lemma}\label{lem:hom-models}
The trace \(\tau\) is a pointwise limit of normalized traces of
finite-dimensional unital representations of \(A\). In particular it
is quasidiagonal.
\end{lemma}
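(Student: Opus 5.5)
We approximate \(\tau=\frac12\tau_\infty+\frac12\gamma\) by normalized traces of finite direct sums of the coordinate representations \(\rho_m\), taken with rational multiplicities. Each \(\rho_m\) is a unital \(*\)-homomorphism \(A\to B(E_m)\cong M_{c_m}\). For integers \(n_0,\dots,n_M\geq0\), not all zero, the direct sum
\[
 \Phi=\bigoplus_{m=0}^M \rho_m^{(n_m)}: A\to M_N,\qquad N=\sum_m n_mc_m,
\]
is again a unital \(*\)-homomorphism. Its normalized trace is
\[
 \tr_N\circ\Phi=\sum_m \frac{n_mc_m}{N}\,\tau_m ,
\]
a convex combination of the \(\tau_m\) with weights \(n_mc_m/N\). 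Choosing the \(n_m\) suitably therefore realizes, up to any prescribed error, any convex combination of finitely many \(\tau_m\) whose coefficients are rational. Since each \(\tau_m\) is a state, rational approximation of real weights changes the functional by a small amount in norm.

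The target \(\tau\) is not itself a finite convex combination, so we approximate it. Fix a finite set \(\mathcal F\subseteq A\) and \(\delta>0\).
\begin{enumerate}[label=(\roman*)]
\item Truncate \(\gamma\). The tail \(\sum_{m>M}2^{-m-1}\tau_m\) has norm \(2^{-M-1}\). Hence \(\gamma\) is within \(2^{-M}\) in norm of the normalized finite combination \(\gamma_M=(1-2^{-M-1})^{-1}\sum_{m\leq M}2^{-m-1}\tau_m\).
\item Approximate \(\tau_\infty\). Since \(\tau_\infty\) is the \(\nu\)-limit of the \(\tau_m\), the set of \(m\) with \(|\tau_m(a)-\tau_\infty(a)|<\delta\) for all \(a\in\mathcal F\) belongs to \(\nu\). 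In particular it is nonempty, so we may pick one such index \(m'\).
\item Combine. The functional \(\frac12\tau_{m'}+\frac12\gamma_M\) is within \(O(\delta+2^{-M})\) of \(\tau\) on \(\mathcal F\). Approximate its real coefficients by rationals and realize the result as \(\tr_N\circ\Phi\) as above.
\end{enumerate}
Indexing by pairs \((\mathcal F,\delta)\), or simply taking a sequence over a countable dense subset of \(A\), gives a sequence of unital finite-dimensional representations \(\Phi_j\). Their normalized traces converge pointwise to \(\tau\): first on the dense set, and then on all of \(A\), because all the functionals are states and hence have norm at most \(1\).

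Quasidiagonality follows immediately from the definition in the Notation. The maps \(\Phi_j\) are unital completely positive and exactly multiplicative, so the multiplicativity defect is zero, and \(\tr_{N_j}\circ\Phi_j\to\tau\) pointwise. Thus \(\tau\in T_{\mathrm{qd}}(A)\).

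The argument is essentially bookkeeping. The only point needing care is the ultralimit: we use that \(\nu\)-large sets are nonempty, so a single good index \(m'\) exists for each finite test set, and no actual convergence of the \(\tau_m\) is required.
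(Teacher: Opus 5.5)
Your proposal is correct and follows essentially the same route as the paper. Both use direct sums of the coordinate representations \(\rho_m\) with multiplicities to realize rational convex combinations of the \(\tau_m\), truncate \(\gamma\) with norm error \(2^{-M}\), pick a coordinate trace close to \(\tau_\infty\) from a \(\nu\)-large set, and pass to a sequence over a dense subset with zero multiplicativity defect. The only difference is cosmetic: the paper packages the approximation by proving that the weak-star closure \(K\) of these normalized traces is convex, whereas you combine the approximants directly.
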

\begin{proof}
Let \(\mathcal D\) be the normalized traces of finite direct sums
of the coordinate representations \(\rho_m\), allowing repetitions,
and let \(K\) be its weak-star closure in \(T(A)\).
Every rational convex combination of coordinate traces belongs to
\(\mathcal D\). Indeed, for \(r_i=p_i/q\geq0\) with
\(\sum_{i=1}^n p_i=q\), set
\[
 D=q\prod_{i=1}^n c_{m_i},\qquad
 h_i=\frac{r_iD}{c_{m_i}}\in\mathbb N_0.
\]
Omitting zero multiplicities, the unital representation
\[
 \bigoplus_{i=1}^n\rho_{m_i}^{\oplus h_i}:A\longrightarrow M_D
\]
has normalized trace \(\sum_i r_i\tau_{m_i}\).
Conversely, the normalized trace of any such direct sum is a rational
convex combination of coordinate traces, with weights equal to the
summand dimensions divided by the total dimension. Thus
\(\mathcal D\) is closed under rational convex combinations.

To see that \(K\) is convex, fix finitely many traces in \(K\),
convex weights, and a finite set of elements of \(A\).
Approximate each trace on that set by an element of \(\mathcal D\),
and approximate the weights by nonnegative rational weights summing
to one. The resulting rational combination belongs to \(\mathcal D\)
and approximates the desired mixture on the given set: the first
error is bounded by the largest trace-approximation error, and the
second by \(\max\|a\|\) times the sum of the weight errors.
Both errors can be made arbitrarily small, proving convexity.

The ultralimit \(\tau_\infty\) belongs to \(K\), since every
weak-star neighborhood of it contains a coordinate trace.
Furthermore,
\[
 \gamma_M=\sum_{m=0}^M2^{-m-1}\tau_m+2^{-M-1}\tau_0
       \in\mathcal D,\qquad
 \|\gamma_M-\gamma\|\leq2^{-M}.
\]
Hence \(\gamma\in K\), and convexity gives \(\tau\in K\).
Finally choose a norm-dense sequence \((a_\ell)\) in the unit ball
of the separable algebra \(A\). For each \(j\), membership in \(K\)
gives a finite direct sum representation \(\theta_j:A\to M_{N_j}\)
whose normalized trace differs from \(\tau\) by less than \(1/j\)
on \(a_1,\ldots,a_j\). Since these traces and \(\tau\) have norm
one, density gives pointwise convergence on all of \(A\).
The representations are unital completely positive and their
multiplicativity defects are identically zero, so they witness
quasidiagonality.
\end{proof}

For \(\varepsilon=0,1\), consider the states
\[
 \lambda_m^\varepsilon(a)=2\tau_m(e^\varepsilon a),\qquad
 \mu_\varepsilon=\lim_{m\to\nu}\lambda_m^\varepsilon .
\]
Positivity follows by replacing \(e_m^\varepsilon\rho_m(a)\) under
the matrix trace with
\(e_m^\varepsilon\rho_m(a)e_m^\varepsilon\).

\begin{lemma}\label{lem:components}
The functionals \(\mu_0,\mu_1\) are tracial states and satisfy
\[
 \tau_\infty=\tfrac12(\mu_0+\mu_1),\qquad
 \mu_\varepsilon(e^\delta)=\delta_{\varepsilon\delta}.
\]
Hence, with \(\eta=\frac13\mu_0+\frac23\gamma\),
\[
                 \tau=\frac14\mu_1+\frac34\eta .
\]
\end{lemma}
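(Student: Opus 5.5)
The plan is to reduce the tracial property of \(\mu_\varepsilon\) to an asymptotic commutation of \(e^\varepsilon\) with the generators. For each generator \(x\) we will show that \(\rho_m([e^\varepsilon,x])\) has normalized trace norm \(\tr_{c_m}(|\cdot|)\to0\). The generators \(\mathcal K\), \(e^\delta\), \(F^\delta\), \(T_i^\delta\) and \(\sigma(g)\) all commute with \(e^\varepsilon\) exactly in each coordinate. Indeed, \(\sigma_m\) and \(T_{m,i}^\delta\) preserve each copy, \(F^\delta\le e^\delta\), and \(\mathcal K\) is a direct sum of full matrix algebras, which contain \(e_m^\varepsilon\) but do not commute with it. We therefore treat \(\mathcal K\) separately. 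The generator \(\mathcal K\) is harmless because any \(k\in\mathcal K\) has \(\|\rho_m(k)\|\to0\), so \(\tau_m\) and \(\lambda_m^\varepsilon\) annihilate it in the limit. More precisely, \(\mu_\varepsilon\) factors through \(A/\mathcal K\), where we will check commutation.

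The only non-commuting generator is \(s\). Here \(e^0s=s=se^1\), so \([e^0,s]=s(1-e^1)\)-type terms equal \(s\) up to sign, and \(s_m\) has rank \(\dim(\overline H_m\otimes V_0)=d_m\). Hence \(\tr_{c_m}|s_m|\le d_m/c_m\le d_mb^{-m}/2\to0\) by the stated fact \(d_mb^{-m}\to0\).

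Let \(\mathcal A\) be the set of \(a\in A\) such that
\[
 \tr_{c_m}\bigl|\rho_m(e^\varepsilon a-ae^\varepsilon)\bigr|\to0 .
\]
Since \(\tr|xy|\le\|x\|\tr|y|\), the set \(\mathcal A\) is a \(*\)-subalgebra: \([e,ab]=[e,a]b+a[e,b]\). It is also norm closed. Adding \(\mathcal K\), which meets the same bound trivially after passing to the \(\nu\)-limit, we obtain \(\mathcal A=A\). Then
\[
 \lambda_m^\varepsilon(ab)=2\tr(e\,ab)=2\tr(b\,e\,a)+o(1)=2\tr(e\,ba)+o(1),
\]
using \(ea=ae+o_{1}\) in normalized trace norm. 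This shows that \(\mu_\varepsilon\) is tracial. Each \(\mu_\varepsilon\) is a state because \(\lambda_m^\varepsilon(1)=2\tr(e_m^\varepsilon)=1\), as the two copies have equal dimension.

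The remaining identities are immediate. We have \(\tau_m=\tfrac12(\lambda_m^0+\lambda_m^1)\) since \(e^0+e^1=1\), and passing to the \(\nu\)-limit gives \(\tau_\infty=\tfrac12(\mu_0+\mu_1)\). Next, \(\lambda_m^\varepsilon(e^\delta)=2\tr(e^\varepsilon e^\delta)=\delta_{\varepsilon\delta}\). Finally,
\[
 \tau=\tfrac14\mu_0+\tfrac14\mu_1+\tfrac12\gamma,
 \qquad
 \tfrac34\eta=\tfrac14\mu_0+\tfrac12\gamma,
\]
which gives \(\tau=\frac14\mu_1+\frac34\eta\).

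The main point needing care is the trace-norm bound for \(s\), namely rank \(d_m\) against the dimension \(c_m\ge 2b^m\). Everything else is bookkeeping.
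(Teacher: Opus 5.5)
Your proof is correct and follows the paper's argument: $e^\varepsilon$ commutes asymptotically with the generators, exactly except for $s$ (defect of normalized size $d_m/c_m\to0$) and $\mathcal K$ (coordinate norms tending to zero); this extends to all of $A$ by the product rule and norm closure, and trace cyclicity finishes. The only difference is that you measure the defect in the normalized trace norm $\tr_{c_m}|\cdot|$ using $|\tr(xy)|\le\|x\|\tr|y|$, where the paper uses the normalized $2$-norm and Cauchy--Schwarz, which changes nothing. There is a small slip: your first sentence lists $\mathcal K$ among the exactly commuting generators before retracting it. Also, $\mathcal K$ meets the bound directly, since $\tr_{c_m}|x|\le\|x\|$, so no ultrafilter is needed there.
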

\begin{proof}
For every \(a\in A\),
\[
 \|[e_m^\varepsilon,\rho_m(a)]\|_{2,\tr_{c_m}}
       \longrightarrow0 .
\]
To verify this, all the specified generators other than \(s\) and
\(\mathcal K\) commute with \(e^\varepsilon\). The \(c_0\)-summands
have coordinate norms tending to zero. For \(s\) and \(s^*\) the
squared commutator norm is
\[
 \frac{d_m}{c_m}=\frac{d_m}{2b^mS_m}\longrightarrow0.
\]
The product rule for commutators and norm density give the assertion
for every \(a\). Matrix trace cyclicity and Cauchy--Schwarz now give
\[
 |\lambda_m^\varepsilon(ab-ba)|
     =2|\tau_m([e^\varepsilon,a]b)|
     \leq 2\|[e_m^\varepsilon,\rho_m(a)]\|_2\,\|b\|
     \longrightarrow0.
\]
Thus \(\mu_\varepsilon\) is tracial. Since \(e^0+e^1=1\),
\(\lambda_m^0+\lambda_m^1=2\tau_m\).
The asserted values on \(e^\delta\) follow from orthogonality and
the equal dimensions of the two copies.
The final convex decomposition follows by substitution.
\end{proof}


\section{Equality of the two sector masses}
\label{sec:direct-rank-proof}

We retain the complete enumeration $(\pi_k,H_k)_{k\geq0}$ of
pairwise inequivalent finite-dimensional irreducible representations of
$G$, with $\pi_0=1_G$ and $d_k=\dim H_k$, and all the data defining $A$.
In particular, the embeddings $J_k:H_{k-1}\to H_k$ are isometries, and
the bridges $s_m$ are the canonical top-level identifications specified
in Section~\ref{sec:construction}. Put
\[
 D=R_0=\sup_{k\geq1}\frac{d_k}{d_{k-1}}<\infty.
\]
All traces denoted by $\operatorname{Tr}$ in this section are
unnormalized. Hilbert--Schmidt norms are taken with respect to these
traces.

\begin{theorem}\label{thm:direct-sector-equality}
Every quasidiagonal tracial state $\mu$ on the algebra $A$ satisfies
$\mu(e^0)=\mu(e^1)=1/2$.
\end{theorem}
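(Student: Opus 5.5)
The plan is to fix a quasidiagonal tracial state \(\mu\) with witnesses \(\psi_j:A\to M_{N_j}\) and show that for large \(j\) the matrices \(\psi_j(e^0)\) and \(\psi_j(e^1)\) are close to projections of equal normalized rank. Then \(\mu(e^0)=\lim_j\tr_{N_j}\psi_j(e^0)\) and \(\mu(e^1)\) agree, and since \(e^0+e^1=1\) both equal \(1/2\).

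\textbf{Step 1: rounding to exact relations.} I would work with \(j\) large and a fixed finite Kazhdan set \(F\subseteq G\) with constant \(\kappa\). Functional calculus turns \(\psi_j(e^\varepsilon)\) and \(\psi_j(F^\varepsilon)\) into projections \(P^\varepsilon\) and \(Q^\varepsilon\) with \(P^0+P^1=1\) and \(Q^\varepsilon\le P^\varepsilon\). The norm errors are controlled by the multiplicativity defect on finitely many generators. The elements \(\psi_j(s)\) and \(\psi_j(T_i^\varepsilon)\) are almost partial isometries, so polar decomposition of their compressions gives exact partial isometries. From \(s^*s=F^1\) and \(ss^*=F^0\) one gets that \(Q^0\) and \(Q^1\) are Murray--von Neumann equivalent, hence \(\operatorname{rank}Q^0=\operatorname{rank}Q^1\); this is the only place where the two copies interact. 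Inside each copy, the relations \(T_i^\varepsilon (T_{i'}^\varepsilon)^*=\delta_{ii'}(\cdot)\) and \(\sum_i (T_i^\varepsilon)^*T_i^\varepsilon=e^\varepsilon-F^\varepsilon\) up to the bottom level are the Cuntz-type ladder relations, and they become exact rank identities after rounding. Ranks are integers, so a small norm error gives exact equality; this is the ``exact ranks of rounded finite-rank compressions'' step.

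\textbf{Step 2: separating representation blocks with property \((T)\).} The map \(g\mapsto\psi_j(\sigma(g))\) is an almost-representation on \(F\), so it is close to a genuine unitary representation \(\theta_j\) of \(G\) on \(\mathbb C^{N_j}\), commuting approximately with the rounded projections. For each \(k\), I would take the Kazhdan spectral projection onto the \(\overline\pi_k\)-isotypic part. This is the invariant-vector projection in \(\overline H_k^{\,*}\otimes\mathbb C^{N_j}\), equivalently the spectral projection of \(\frac1{|F|}\sum_{g\in F}\operatorname{Re}\,\pi_k(g)\otimes\theta_j(g)\) near \(1\). The Kazhdan gap \(\kappa\) does not depend on \(k\), so these projections are isolated uniformly in \(k\) and in \(j\). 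Because the enumeration is exhaustive, their sum over \(k\) is \(1\). Each isotypic block then decomposes \(P^\varepsilon\) into level pieces of the form multiplicity\(\,\times\,d_k\), and the rounded ladder relations relate the multiplicities on level \(k-1\) to those on level \(k\) through the factor \(b\) and the isometries \(J_k\).

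\textbf{Step 3: weighted comparison, which I expect to be the main obstacle.} The top join equalizes only the top levels, and equality on finitely many blocks does not control the full trace. I would assign block \(k\) the weight \(d_kb^{-k}\) and compare the two copies in Hilbert--Schmidt norm: the difference \(\operatorname{rank}P^0-\operatorname{rank}P^1\) is written as a weighted sum of block discrepancies, and each discrepancy is propagated down the ladder from the top, where it vanishes. Convergence of \(S=\sum_k d_kb^{-k}\), which uses \(b>R_0\), bounds the part of the mass beyond a finite level \(K\) by \(C\sum_{k>K}d_kb^{-k}\) uniformly in \(j\). The finitely many remaining blocks are made exact by Step 1 with \(j\) large depending on \(K\). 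The delicate point is that the errors from the Kazhdan projections must be summed over all blocks, not merely a fixed finite collection. I would try to handle this with a single uniform estimate: an HS error per block of order \(\kappa^{-1}\cdot\)(defect)\(\cdot\sqrt{d_k}\), summed against the geometric weights. Letting \(j\to\infty\) and then \(K\to\infty\) gives \(\mu(e^0)=\mu(e^1)\).
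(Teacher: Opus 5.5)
\textbf{There is a genuine gap, located in Step~3 where you expected the difficulty: the truncation scheme cannot work.}

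Suppose the rounded relations are exact on levels $0,\dots,K$. In each copy the multiplicities satisfy $n^\varepsilon_{k-1}=t^\varepsilon_{k-1}+b\,n^\varepsilon_k$, and the bridge gives $t^0_k=t^1_k$. Hence $n^0_{k-1}-n^1_{k-1}=b(n^0_k-n^1_k)$, so a level-$K$ discrepancy reappears multiplied by $b^{K-k}$ at level $k$. Take a discrepancy of the size your tail bound allows, $N^{-1}d_K|n^0_K-n^1_K|\lesssim d_Kb^{-K}$. The resulting imbalance $N^{-1}|n^0_K-n^1_K|\sum_{k\le K}d_kb^{K-k}$ is then of order $\sum_{k\le K}d_kb^{-k}\approx S$, not small, however large $K$ is.

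The trace $\mu_1$ shows this is not an artifact. Being a trace on $A$, it satisfies every ladder and bridge relation. Its mass on level $k$ of copy~1 is $d_kb^{-k}/S$, so it obeys your tail bound, and yet $\mu_1(e^1)=1$. Exactness on a fixed finite set of blocks plus tail control (first $j\to\infty$, then $K\to\infty$) therefore cannot separate quasidiagonal traces from $\mu_1$.

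\emph{The missing idea is a boundary condition that only finite rank provides.} If $P$ has rank $N$, the partial-trace identity $(\Tr_{H_k}\otimes\mathrm{id})(q_k)=z_k/d_k$ gives $\Tr((1\otimes P)q_k(1\otimes P))\le N/d_k$. So the rounded level ranks $r^\varepsilon_k$ vanish once $d_k>2N$. To use this you need the exact recurrences $r^\varepsilon_{k-1}-t^\varepsilon_{k-1}=b\,r^\varepsilon_k$ and $t^0_k=t^1_k$ at all levels simultaneously, up to a cutoff that grows with $N_j$.

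The paper gets these from the $k$-independent bound $\|[1\otimes P,q_k]\|\le C_q\delta$. This comes from a resolvent integral around the uniform Kazhdan gap of $\mathsf L_k$, using only $\|[1\otimes P,\mathsf L_k]\|\le\delta$. Descending from the zero tail then gives $r^0_k=r^1_k$ exactly for every $k$. Only afterwards is an approximate estimate used: the global bound $|\operatorname{rank}P^\varepsilon-\sum_kd_kr^\varepsilon_k|\le 40\kappa^{-2}\delta^2N$. It comes from the Hilbert--Schmidt projection of $P^\varepsilon$ onto $\operatorname{Ad}u(G)$-invariant operators together with $\sum_kd_k|r^\varepsilon_k-\operatorname{rank}B_k|\le 4\|P^\varepsilon-Q^\varepsilon\|_{\mathrm{HS}}^2$. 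Neither the weights $d_kb^{-k}$ nor the convergence of $S$ plays any role in this theorem.

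\textbf{Step~2 has a second, independent problem.} You claim that $g\mapsto\psi_j(\sigma(g))$ is operator-norm close to a genuine representation $\theta_j$ on $\mathbb C^{N_j}$. That is an operator-norm stability assertion which Kazhdan's inequality does not yield. Kazhdan's inequality controls almost-invariant vectors only in Hilbert-space norm; for the conjugation action this gives closeness of order $\kappa^{-1}\delta\sqrt N$ in unnormalized Hilbert--Schmidt norm. That suffices for the final global comparison but not for rounding level ranks exactly.

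The paper never needs such a $\theta_j$. It passes to the Stinespring dilation $\psi_j=V_j^*\rho_jV_j$, where:
\begin{itemize}
\item $u=\rho_j\circ\sigma$ is an honest, possibly infinite-dimensional, representation;
\item $q_k$ is the image of a projection in $M_{d_k}(A)$;
\item the level maps $V^\varepsilon_{k,i}=\sqrt{d_k/d_{k-1}}\,q^\varepsilon_k(J_k\otimes\rho(T^\varepsilon_i))q^\varepsilon_{k-1}$ and the bridges $w_k=q^0_k(1\otimes\rho(s))q^1_k$ satisfy exact identities.
\end{itemize}
Only $P_j=V_jV_j^*$ is finite-rank, and it almost commutes with finitely many generators; it is then rounded to commute with $e^0,e^1$.

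This also shows what your Step~1 lacks. The relations of the $T^\varepsilon_i$ alone give just one rank identity per copy. A single equality $\operatorname{rank}Q^0=\operatorname{rank}Q^1$ for the whole top projections says nothing level by level. The recurrences require coupling $T^\varepsilon_i$ with $J_k$ and the Kazhdan projections. That coupling is where $R_0<\infty$ enters, through $\sqrt{d_k/d_{k-1}}\le\sqrt{R_0}$.
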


We first record the finite-dimensional compression estimate needed to
extract exact integer identities from norm approximations.

\begin{lemma}\label{lem:direct-compression-rank}
Let $v:\mathcal H_1\to\mathcal H_2$ be a partial isometry with
$v^*v=e$ and $vv^*=f$. Let $P_i$ be finite-rank projections on
$\mathcal H_i$. If
\[
 \|P_2v-vP_1\|\leq\eta<1/100,
\]
then
\[
 \operatorname{rank}\chi_{[1/2,\infty)}(P_1eP_1)
 =\operatorname{rank}\chi_{[1/2,\infty)}(P_2fP_2).
\]
Here each compression is regarded on the range of the corresponding
finite-rank projection.
\end{lemma}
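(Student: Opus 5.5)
The approach is to find an operator $w$ whose two squares $w^*w$ and $ww^*$ approximate the two compressions, and then to use a spectral gap at $1/2$ so that these small perturbations cannot change the rank of the spectral projection at $1/2$. Put $E=P_2v-vP_1$, so that $\|E\|\leq\eta$, and let
\[
 w=P_2vP_1:\operatorname{ran}P_1\to\operatorname{ran}P_2 .
\]
Write $A_1=P_1eP_1$ on $\operatorname{ran}P_1$ and $A_2=P_2fP_2$ on $\operatorname{ran}P_2$. Both are positive contractions on finite-dimensional spaces.

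\emph{Step 1: the compressions are close to $w^*w$ and $ww^*$.} Since $w=vP_1+EP_1$, expanding gives
\[
 w^*w=P_1v^*vP_1+(\text{cross terms})=A_1+O(2\eta+\eta^2).
\]
Similarly $vP_1=P_2v-E$, so
\[
 ww^*=P_2vP_1v^*P_2=P_2vv^*P_2-P_2Ev^*P_2=A_2+O(\eta).
\]
Hence $\|A_1-w^*w\|\leq3\eta$ and $\|A_2-ww^*\|\leq\eta$.

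\emph{Step 2: both compressions have a spectral gap at $1/2$.} This is the step where some care is needed, because a compression of a projection can in general have spectrum anywhere in $[0,1]$. The hypothesis, however, forces $P_1$ to almost commute with $e$. Indeed,
\[
 eP_1=v^*vP_1=v^*P_2v-v^*E ,
\]
and $v^*P_2v$ is self-adjoint, so $\|eP_1-P_1e\|\leq2\eta$. Consequently
\[
 A_1^2=P_1eP_1eP_1\approx P_1eeP_1=A_1
\]
within $2\eta$. The same argument applies to $f$: from $P_2f=P_2vv^*=vP_1v^*+Ev^*$ and the self-adjointness of $vP_1v^*$ we get $\|[f,P_2]\|\leq2\eta$, and hence $\|A_2^2-A_2\|\leq2\eta$.

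For any eigenvalue $\lambda\in[0,1]$ of $A_i$ we then have $|\lambda^2-\lambda|\leq2\eta$. Since $\lambda^2-\lambda$ has absolute value $1/4$ at $\lambda=1/2$, this forces $\lambda$ to lie within about $4\eta<1/25$ of $\{0,1\}$. Thus neither $A_1$ nor $A_2$ has spectrum in $(1/25,24/25)$.

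\emph{Step 3: compare ranks via Weyl's inequality.} Weyl's eigenvalue perturbation inequality says that a self-adjoint perturbation of norm at most $3\eta<3/100$ moves each ordered eigenvalue by at most that amount. It follows that
\[
 \operatorname{rank}\chi_{[1/2,\infty)}(A_1)=\operatorname{rank}\chi_{[1/2,\infty)}(w^*w),
\]
and likewise for $A_2$ and $ww^*$. In both cases the perturbed spectrum avoids a neighbourhood of $1/2$, because the unperturbed spectrum avoids $(1/25,24/25)$ and the perturbation is smaller than $3/100$. Finally, $w^*w$ and $ww^*$ have the same nonzero eigenvalues with the same multiplicities, by the polar decomposition of $w$. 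Therefore their spectral projections for $[1/2,\infty)$ have equal rank, and combining the three equalities proves the lemma.

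The only step that needs any thought is Step 2. Without the near-commutation $\|[e,P_1]\|\leq2\eta$ and $\|[f,P_2]\|\leq2\eta$ extracted from the hypothesis, eigenvalues could sit near $1/2$, and the threshold rank would not be stable under the $O(\eta)$ errors from Step 1.
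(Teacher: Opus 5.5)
Your proof is correct and follows the paper's route: you extract $\|[P_1,e]\|,\|[P_2,f]\|\le 2\eta$ from the hypothesis, deduce a spectral gap at $1/2$ for both compressions, and compare them with the Gram operators of $P_2vP_1$. The differences are minor. Your estimates are linear in $\eta$ where the paper's are quadratic, and both suffice for $\eta<1/100$. You also finish with Weyl's inequality and the equality of the nonzero spectra of $w^*w$ and $ww^*$, whereas the paper compresses $P_2vP_1$ between the two spectral projections for $[1/2,\infty)$ and reads off the rank equality from the polar decomposition of that compression.
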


\begin{proof}
The hypothesis and its adjoint imply
$\|[P_1,e]\|,\|[P_2,f]\|\leq2\eta$.
Set $a=P_1eP_1$, $b=P_2fP_2$, and let $E,F$ be their respective
spectral projections for $[1/2,\infty)$. The off-diagonal corner formula
for the compression of a projection gives
\[
 \|a-a^2\|,\ \|b-b^2\|\leq4\eta^2.
\]
For $t\in[0,1]$, the inequality
$\operatorname{dist}(t,\{0,1\})\leq2t(1-t)$ therefore gives
\[
 \|a-E\|,\ \|b-F\|\leq8\eta^2.
\]
In particular, the spectra are uniformly separated from $1/2$.
Put $W_0=P_2vP_1$. Then
\[
 a-W_0^*W_0=P_1v^*(1-P_2)vP_1,
 \qquad
 b-W_0W_0^*=P_2v(1-P_1)v^*P_2.
\]
Both positive operators have norm at most $\eta^2$; the second estimate
uses the adjoint of the assumed rectangular commutator bound. Hence
\[
 \|W_0^*W_0-E\|,\ \|W_0W_0^*-F\|\leq9\eta^2.
\]
It follows that $\|W_0(1-E)\|,\|(1-F)W_0\|\leq3\eta$.
For $W=FW_0E$, this gives $\|W-W_0\|\leq6\eta$. Since $W$ and $W_0$
are contractions,
\[
 \|W^*W-E\|,\ \|WW^*-F\|\leq12\eta+9\eta^2<1.
\]
Thus the two Gram operators are invertible on $E\mathcal H_1$ and
$F\mathcal H_2$. The polar decomposition of $W$ has initial projection
$E$ and final projection $F$, proving equality of the ranks.
\end{proof}

If $e_1,\ldots,e_l$ are mutually orthogonal projections on
$\mathcal H$, the column $v=(e_1,\ldots,e_l)^t:\mathcal H\to
\mathcal H^{\oplus l}$ has initial projection $\sum_i e_i$ and final
projection $\operatorname{diag}(e_1,\ldots,e_l)$. Moreover,
\[
 \|\operatorname{diag}(P,\ldots,P)v-vP\|
 \leq\left(\sum_i\|[P,e_i]\|^2\right)^{1/2}.
\]
Consequently Lemma~\ref{lem:direct-compression-rank} proves additivity
of rounded compressed ranks whenever this last bound is below $1/100$.

\begin{proof}[Proof of Theorem~\ref{thm:direct-sector-equality}]
\emph{Uniform spectral projections.}
Choose a finite symmetric Kazhdan set $\mathcal F\subset G$ and a
constant $\kappa>0$ such that, in every unitary representation $U$,
\begin{equation}\label{eq:direct-kazhdan}
 \max_{g\in\mathcal F}\|U(g)\xi-\xi\|\geq\kappa\|\xi\|
 \quad\text{if }\xi\perp\mathcal H^U.
\end{equation}
All the corresponding unitaries $\sigma(g)$ belong to $A$.
Write $h=|\mathcal F|$, $\alpha=\kappa^2/(2h)$, and $C_q=2/\alpha$.

Let $\rho:A\to B(\mathcal H)$ be a unital representation and put
$u(g)=\rho(\sigma(g))$. Define, on $H_k\otimes\mathcal H$,
\[
 \mathsf L_k=\frac1h\sum_{g\in\mathcal F}
        (1-\pi_k(g)\otimes u(g)),
 \qquad q_k=\chi_{\{0\}}(\mathsf L_k).
\]
Symmetry and \eqref{eq:direct-kazhdan} show that
\[
 \langle \mathsf L_k\xi,\xi\rangle
 =\frac1{2h}\sum_{g\in\mathcal F}
       \|(\pi_k(g)\otimes u(g))\xi-\xi\|^2,
 \qquad \operatorname{sp}(\mathsf L_k)\subset\{0\}\cup[\alpha,2].
\]
Thus $q_k$ projects onto the invariant vectors of $\pi_k\otimes u$.
The same uniform spectral gap in all representations of $A$ makes
$q_k$ the image of a projection in $M_{d_k}(A)$.
If a projection $P$ on $\mathcal H$ satisfies
$\max_{g\in\mathcal F}\|[P,u(g)]\|\leq\delta$, then
$\|[1\otimes P,\mathsf L_k]\|\leq\delta$. Resolvent integration around
$|z|=\alpha/2$ gives the dimension-independent bound
\begin{equation}\label{eq:direct-q-commutator}
 \|[1\otimes P,q_k]\|
 \leq\frac1{2\pi}(\pi\alpha)(2/\alpha)^2\delta
 =C_q\delta.
\end{equation}

Let $z_k$ be the orthogonal projection onto the
$\overline\pi_k$-isotypic subspace of $u$. No membership of $z_k$ in
$\rho(A)$ is asserted. Write that subspace as
$\overline H_k\otimes\mathcal M_k$. For an orthonormal basis
$(\xi_a)_{a=1}^{d_k}$ of $H_k$, set
\[
 \Omega_k=d_k^{-1/2}\sum_{a=1}^{d_k}
                    \xi_a\otimes\overline\xi_a.
\]
Invariant vectors of $\pi_k\otimes u$ correspond to intertwiners
from $\overline\pi_k$ to $u$. Schur's lemma therefore gives
\begin{equation}\label{eq:direct-partial-trace}
 q_k=|\Omega_k\rangle\langle\Omega_k|\otimes1_{\mathcal M_k}
 \quad\text{on }H_k\otimes(\overline H_k\otimes\mathcal M_k),
 \qquad
 (\operatorname{Tr}_{H_k}\otimes\mathrm{id})(q_k)=z_k/d_k.
\end{equation}
The projection vanishes on the other isotypic subspaces and on the
orthogonal complement of all finite-dimensional subrepresentations.
Indeed, a nonzero intertwiner from $\overline\pi_k$ would supply such
a subrepresentation.

The hypothesis $d_k\to\infty$ is part of the admissible data in
Theorem~\ref{thm:main}.

\smallskip\noindent\emph{Rectangular shift identities.}
We next establish the exact algebraic relations used for the ranks.
Put
\[
 q_k^\varepsilon=q_k(1\otimes\rho(e^\varepsilon)),
 \qquad b_k^\varepsilon=q_k(1\otimes\rho(F^\varepsilon)).
\]
These are projections with $b_k^\varepsilon\leq q_k^\varepsilon$.
At coordinate $m\geq k$, the range of $q_k^\varepsilon$ is
$\mathbb C\Omega_k\otimes V_{m-k}$ in
$H_k\otimes E_m^\varepsilon$; it is zero if $m<k$.
The projection $b_k^\varepsilon$ has the same range at coordinate $m=k$
and vanishes at every other coordinate.

For $k\geq1$, define the rectangular operator
\[
 V_{k,i}^\varepsilon
 =\sqrt{\frac{d_k}{d_{k-1}}}\,
 q_k^\varepsilon(J_k\otimes\rho(T_i^\varepsilon))q_{k-1}^\varepsilon:
 H_{k-1}\otimes\mathcal H\longrightarrow H_k\otimes\mathcal H.
\]
The external factor is $J_k:H_{k-1}\to H_k$, while the internal
source shift has component
\[
 \overline J_k\otimes\Delta_{m-k,i}:
 \overline H_{k-1}\otimes V_{m-k+1}
 \longrightarrow\overline H_k\otimes V_{m-k}.
\]
The conjugate embeddings are essential to the scalar identity
\[
 \langle\Omega_k,(J_k\otimes\overline J_k)\Omega_{k-1}\rangle
 =\sqrt{d_{k-1}/d_k}.
\]
Hence $V_{k,i}^\varepsilon$ acts as $\Delta_{m-k,i}$ on the selected
invariant multiplicity spaces at every coordinate $m\geq k$, and
is zero at the remaining coordinates. The face-map identities imply
\begin{equation}\label{eq:direct-shift-identities}
 \begin{split}
 V_{k,i}^\varepsilon(V_{k,l}^\varepsilon)^*
   &=\delta_{il}q_k^\varepsilon,\\
 \sum_{i=1}^{b}(V_{k,i}^\varepsilon)^*V_{k,i}^\varepsilon
   &=q_{k-1}^\varepsilon-b_{k-1}^\varepsilon.
 \end{split}
\end{equation}
These coordinate identities hold in the rectangular matrix spaces
over $A$, since the coordinate representation of $A$ is faithful;
they consequently hold after applying $\rho$.
In particular, the column $V_k^\varepsilon$ is a partial isometry with
final projection $\operatorname{diag}(q_k^\varepsilon,\ldots,q_k^\varepsilon)$.
The products $(V_{k,i}^\varepsilon)^*V_{k,l}^\varepsilon$ are not asserted
to vanish for $i\ne l$.

For the canonical bridge, put
\[
 w_k=q_k^0(1_{H_k}\otimes\rho(s))q_k^1.
\]
Only coordinate $m=k$ contributes, since $s_m$ is supported on
level $m$ and $q_k$ selects level $k$. At this coordinate it sends
the invariant identity vector in copy~1 to that in copy~0. Hence
\begin{equation}\label{eq:direct-bridge}
 w_k^*w_k=b_k^1,\qquad w_kw_k^*=b_k^0.
\end{equation}

\smallskip\noindent\emph{Exact ranks after compression.}
Suppose now that $P$ has rank $N$, commutes with $\rho(e^0)$ and
$\rho(e^1)$, and has commutators of norm at most $\delta$ with
\begin{equation}\label{eq:direct-finite-set}
 u(g)\ (g\in\mathcal F),\quad
 \rho(F^0),\rho(F^1),\quad
 \rho(T_i^\varepsilon)\ (1\leq i\leq b,\ \varepsilon=0,1),\quad
 \rho(s).
\end{equation}
Write $P_k=1_{H_k}\otimes P$. Equation
\eqref{eq:direct-q-commutator} gives
\[
 \|[P_k,q_k^\varepsilon]\|\leq C_q\delta,
 \qquad
 \|[P_k,b_k^\varepsilon]\|,
 \|[P_k,q_k^\varepsilon-b_k^\varepsilon]\|
 \leq(C_q+1)\delta.
\]
Expanding through the three factors defining the rectangular shift
and the bridge gives
\[
 \|P_kV_{k,i}^\varepsilon-V_{k,i}^\varepsilon P_{k-1}\|
 \leq\sqrt D(2C_q+1)\delta,
 \qquad
 \|[P_k,w_k]\|\leq(2C_q+1)\delta.
\]
Set
\[
 C_* =\max\{\sqrt{bD}(2C_q+1),\sqrt2(C_q+1),2C_q+1,1\},
 \qquad 0<\delta<(200C_*)^{-1}.
\]
The column in \eqref{eq:direct-shift-identities}, the two-piece
orthogonal decomposition $q_k^\varepsilon=b_k^\varepsilon+
(q_k^\varepsilon-b_k^\varepsilon)$, and the bridge
\eqref{eq:direct-bridge} all meet the hypothesis of
Lemma~\ref{lem:direct-compression-rank}. Therefore the rounded ranks
\[
 r_k^\varepsilon=\operatorname{rank}
    \chi_{[1/2,\infty)}(P_kq_k^\varepsilon P_k),
 \qquad
 t_k^\varepsilon=\operatorname{rank}
    \chi_{[1/2,\infty)}(P_kb_k^\varepsilon P_k)
\]
satisfy the exact recurrences
\begin{equation}\label{eq:direct-rank-recurrence}
 r_{k-1}^\varepsilon-t_{k-1}^\varepsilon=br_k^\varepsilon\quad(k\geq1),
 \qquad t_k^0=t_k^1\quad(k\geq0).
\end{equation}

Put $P^\varepsilon=P\rho(e^\varepsilon)$. Since $P$ commutes with the
sectors, $r_k^\varepsilon$ is the rounded rank of
$(1\otimes P^\varepsilon)q_k(1\otimes P^\varepsilon)$. The partial trace
formula \eqref{eq:direct-partial-trace} gives
\[
 \operatorname{Tr}((1\otimes P^\varepsilon)q_k(1\otimes P^\varepsilon))
 =d_k^{-1}\operatorname{Tr}(P^\varepsilon z_k)\leq N/d_k.
\]
This is the unnormalized trace of a positive finite-rank operator,
and hence bounds its operator norm. Consequently
\[
 r_k^\varepsilon=0\quad\text{whenever }d_k>2N.
\]
As $d_k\to\infty$, both rank sequences eventually vanish.
Subtracting \eqref{eq:direct-rank-recurrence} and descending from
their common zero tail yields
\begin{equation}\label{eq:direct-equal-ranks}
 r_k^0=r_k^1\quad(k\geq0).
\end{equation}

\smallskip\noindent\emph{Recovering the whole sector dimensions.}
It remains to recover the dimensions of the sectors from these ranks.
Fix $\varepsilon$. Let $X^\varepsilon$ be the orthogonal projection of
$P^\varepsilon$ onto the invariant subspace for conjugation by $u(G)$
on the Hilbert space of Hilbert--Schmidt operators on $\mathcal H$.
The rank of $[P^\varepsilon,u(g)]$ is at most $2N$ and its norm is at
most $\delta$. Equation \eqref{eq:direct-kazhdan} therefore gives
\begin{equation}\label{eq:direct-hs-average}
 \|P^\varepsilon-X^\varepsilon\|_{\mathrm{HS}}
 \leq\kappa^{-1}\max_{g\in\mathcal F}
                       \|[P^\varepsilon,u(g)]\|_{\mathrm{HS}}
 \leq\kappa^{-1}\delta\sqrt{2N}.
\end{equation}
Cesaro means of powers of the symmetric averaging operator
$h^{-1}\sum_{g\in\mathcal F}\operatorname{Ad}u(g)$ converge in
Hilbert--Schmidt norm to this orthogonal projection. Its fixed vectors
are the invariant vectors, because the quadratic form of one minus
the average is the sum of squared displacements, followed by
\eqref{eq:direct-kazhdan}. Each mean applied to $P^\varepsilon$ is a
positive contraction supported under $\rho(e^\varepsilon)$.
Hilbert--Schmidt convergence implies operator-norm convergence, so
$X^\varepsilon$ is a positive compact contraction with this support and
commutes with $u(G)$. Hence
\[
 Q^\varepsilon=\chi_{[1/2,\infty)}(X^\varepsilon)
\]
is a finite-rank projection that commutes with $u(G)$ and lies under
$\rho(e^\varepsilon)$.

We use explicitly the following Hilbert--Schmidt minimization fact.
If $X$ is a positive Hilbert--Schmidt contraction with eigenvalues
$\lambda_j$, then for any finite-rank projection $R$,
\[
 \|X-R\|_{\mathrm{HS}}^2
 =\sum_j\bigl(\lambda_j^2+(1-2\lambda_j)
                            \langle R\xi_j,\xi_j\rangle\bigr).
\]
Each diagonal entry of $R$ belongs to $[0,1]$, so the expression is
minimized term by term by $\chi_{[1/2,\infty)}(X)$. This projection
has finite rank; any zero-eigenspace contributes only nonnegative
diagonal terms. Applying the fact to $X^\varepsilon$ and using the
triangle inequality and \eqref{eq:direct-hs-average} gives
\begin{equation}\label{eq:direct-pq-distance}
 \|P^\varepsilon-Q^\varepsilon\|_{\mathrm{HS}}^2
 \leq4\|P^\varepsilon-X^\varepsilon\|_{\mathrm{HS}}^2
 \leq8\kappa^{-2}\delta^2N.
\end{equation}

On $q_k(H_k\otimes\mathcal H)$ define
\[
 A_k=q_k(1\otimes P^\varepsilon)q_k,
 \qquad B_k=q_k(1\otimes Q^\varepsilon),
 \qquad R_k=\chi_{[1/2,\infty)}(A_k).
\]
Here $B_k$ is a projection, since $Q^\varepsilon$ commutes with $u(G)$
and hence with $q_k$. The nonzero eigenvalues of $Y^*Y$ and $YY^*$,
including their multiplicities, agree for
$Y=q_k(1\otimes P^\varepsilon)$, so $\operatorname{rank}R_k=r_k^\varepsilon$.
The minimization fact, followed by the triangle inequality, yields
\[
 \|R_k-B_k\|_{\mathrm{HS}}
 \leq2\|A_k-B_k\|_{\mathrm{HS}}
 \leq2\|(1\otimes(P^\varepsilon-Q^\varepsilon))q_k\|_{\mathrm{HS}}.
\]
For finite-rank projections $R,B$ one has
\[
 |\operatorname{rank}R-\operatorname{rank}B|
 \leq\operatorname{rank}R+\operatorname{rank}B
                 -2\operatorname{Tr}(RB)=\|R-B\|_{\mathrm{HS}}^2.
\]
Writing $D_\varepsilon=P^\varepsilon-Q^\varepsilon$, we conclude that
\begin{equation}\label{eq:direct-weighted-term}
 |r_k^\varepsilon-\operatorname{rank}B_k|
 \leq4\|(1\otimes D_\varepsilon)q_k\|_{\mathrm{HS}}^2
 =\frac4{d_k}\operatorname{Tr}(D_\varepsilon^2z_k).
\end{equation}
For the last equality, cyclicity against the trace-class operator
$1\otimes D_\varepsilon^2$ gives
\[
 \operatorname{Tr}(q_k(1\otimes D_\varepsilon^2)q_k)
 =\operatorname{Tr}((1\otimes D_\varepsilon^2)q_k)
 =\operatorname{Tr}\bigl(D_\varepsilon^2
       (\operatorname{Tr}_{H_k}\otimes\mathrm{id})(q_k)\bigr),
\]
and then \eqref{eq:direct-partial-trace} applies.
Multiply \eqref{eq:direct-weighted-term} by $d_k$, sum over finite
sets, and take the supremum. The projections $z_k$ are mutually
orthogonal, so every finite sum of them is at most the identity.
All terms are nonnegative, and thus
\begin{equation}\label{eq:direct-weighted-sum}
 \sum_kd_k|r_k^\varepsilon-\operatorname{rank}B_k|
 \leq4\sum_k\operatorname{Tr}(D_\varepsilon^2z_k)
 \leq4\|P^\varepsilon-Q^\varepsilon\|_{\mathrm{HS}}^2.
\end{equation}
No assertion that $\sum_kz_k=1$ is needed.

The finite-dimensional group-invariant range of $Q^\varepsilon$
decomposes into irreducibles in the complete enumeration. The
multiplicity of $\overline\pi_k$ is precisely
$\operatorname{rank}B_k$, so
\[
 \operatorname{rank}Q^\varepsilon
   =\sum_kd_k\operatorname{rank}B_k.
\]
Combining this equality with \eqref{eq:direct-weighted-sum},
$|\operatorname{rank}P^\varepsilon-\operatorname{rank}Q^\varepsilon|
\leq\|P^\varepsilon-Q^\varepsilon\|_{\mathrm{HS}}^2$, and
\eqref{eq:direct-pq-distance} gives
\[
 \left|\operatorname{rank}P^\varepsilon-\sum_kd_kr_k^\varepsilon\right|
 \leq5\|P^\varepsilon-Q^\varepsilon\|_{\mathrm{HS}}^2
 \leq40\kappa^{-2}\delta^2N.
\]
The rank sums are finite because $r_k^\varepsilon=0$ for $d_k>2N$.
Equation \eqref{eq:direct-equal-ranks} now implies
\begin{equation}\label{eq:direct-sector-bound}
 |\operatorname{rank}P^0-\operatorname{rank}P^1|
 \leq80\kappa^{-2}\delta^2N.
\end{equation}

\smallskip\noindent\emph{Passage to quasidiagonal traces.}
Finally, let $\psi_j:A\to M_{N_j}$ be ucp maps witnessing
quasidiagonality of $\mu$: they are asymptotically multiplicative
in norm and $\operatorname{tr}_{N_j}\circ\psi_j\to\mu$ pointwise.
Take Stinespring dilations
$\psi_j(a)=V_j^*\rho_j(a)V_j$ and put $P_j=V_jV_j^*$.
For every fixed contraction $a$,
\[
 \|(1-P_j)\rho_j(a)P_j\|^2
 =\|\psi_j(a^*a)-\psi_j(a)^*\psi_j(a)\|\longrightarrow0.
\]
Applying the same formula to $a^*$ controls the other off-diagonal
corner. Hence the maximum $\delta_j$ of the commutator norms over
the fixed finite set \eqref{eq:direct-finite-set} together with
$e^0,e^1$ tends to zero.

We make the compression commute exactly with the sectors before
applying \eqref{eq:direct-sector-bound}. At one stage, write
$P=P_j$, $e=\rho_j(e^0)$, and $\delta_0=\delta_j$. The positive
finite-rank contraction
\[
 D_P=ePe+(1-e)P(1-e)
\]
satisfies $\|D_P-P\|=\|[P,e]\|\leq\delta_0$ by the two
off-diagonal block formulas. For $\delta_0<1/4$, its spectrum lies
in $[0,\delta_0]\cup[1-\delta_0,1]$. Thus
\[
 \widetilde P=\chi_{[1/2,\infty)}(D_P),
 \qquad\|\widetilde P-P\|\leq2\delta_0<1.
\]
It commutes with both sectors and has rank $N_j$. Indeed, projections
at distance less than one restrict injectively to each other's ranges,
which proves equality of their finite ranks. For every contraction in
the fixed finite set,
\[
 \|[\widetilde P,\rho_j(a)]\|
 \leq\delta_0+2\|\widetilde P-P\|\leq5\delta_0.
\]
For sufficiently large $j$, the required uniform tolerance therefore
holds with $\delta=5\delta_j$. By \eqref{eq:direct-sector-bound},
\[
 N_j^{-1}\left|\operatorname{Tr}
             (\widetilde P_j\rho_j(e^0-e^1))\right|
 \leq2000\kappa^{-2}\delta_j^2.
\]
The operator $\widetilde P_j-P_j$ has rank at most $2N_j$ and norm
at most $2\delta_j$, so its trace pairing with the norm-one operator
$\rho_j(e^0-e^1)$ has absolute value at most $4\delta_jN_j$.
Consequently
\[
 \left|\operatorname{tr}_{N_j}\psi_j(e^0-e^1)\right|
 \leq4\delta_j+2000\kappa^{-2}\delta_j^2\longrightarrow0.
\]
Pointwise trace convergence gives $\mu(e^0)=\mu(e^1)$, and their
sum is $\mu(1)=1$.
\end{proof}

\section{Proof of the main theorem}\label{sec:conclusion}
Fix the data in Theorem~\ref{thm:main} and an integer \(b>R_0\),
and form the algebra in Section~\ref{sec:construction}. It is separable,
unital and RFD. Lemma~\ref{lem:hom-models} supplies its faithful
quasidiagonal trace \(\tau\).
Let \(\mu_1\) be the component supported on copy~1 from
Lemma~\ref{lem:components}, and set
\[
                 \mu_2=\frac13\mu_0+\frac23\gamma.
\]
That lemma gives \(\tau=\frac14\mu_1+\frac34\mu_2\) and
\(\mu_1(e^0)=0,\ \mu_1(e^1)=1\). The trace-balance theorem proved
in the preceding section implies that \(\mu_1\) is not quasidiagonal.
This proves Theorem~\ref{thm:main}, including failure of the face
property.

The same argument shows that \(\mu_0\) is not quasidiagonal.
Neither conclusion is an assertion about a particular unsuccessful
sequence of compressed maps: the trace-balance theorem applies to
every possible sequence of quasidiagonal witnesses.
The direct finite-dimensional models of the faithful trace and the
vanishing normalized size of the top join explain why the convex
combination can behave differently from its components.

\enlargethispage{3\baselineskip}
\section*{Acknowledgment}
OpenAI's ChatGPT assisted with developing and checking the arguments
and with preparing this manuscript.

\bibliographystyle{amsplain}
\bibliography{references}
\end{document}